\DeclareMathOperator*{\argmin}{arg\,min}
\renewcommand{\epsilon}{\varepsilon}
\newcommand\ubd{\overline{\mbox{\rm dim}}_{\rm B}\,} 
\newcommand\lbd{\underline{\mbox{\rm dim}}_{\rm B}\,} 
\newcommand\da{\underline{\mbox{\rm dim}}_{\,\theta}} 
\newcommand\uda{\overline{\mbox{\rm dim}}_{\,\theta}} 
\newtheorem{theo}{Theorem}[section]
\newtheorem{lem}[theo]{Lemma}
\newtheorem{prop}[theo]{Proposition}
\newtheorem{defi}[theo]{Definition}
\newtheorem{rmk}[theo]{Remark}
\newtheorem{question}{Question}
\title[Intermediate Dimensions of Complementary Sets]{Intermediate Dimensions of Complementary Sets}
\author{Nicolás Angelini}
\address{Departamento de Matemática, Facultad de Ciencias Físico Matemáticas y Naturales, Universidad Nacional de San Luis,\\
and Instituto de Matemática Aplicada San Luis (IMASL, CONICET), San Luis, Argentina.}
\email{nicolas.angelini.2015@gmail.com}
\author{Úrsula Molter}
\address{Departamento de Matemática, Facultad de Ciencias Exactas y Naturales, Universidad de Buenos Aires,\\
and Instituto de Investigaciones Matemáticas Luis A. Santaló (IMAS, UBA-CONICET), Buenos Aires, Argentina.}
\email{umolter@gmail.com}
\subjclass[2020]{28A80, 28A75}
\keywords{Fractals, Intermediate dimensions, Complementary sets}
\begin{document}

\begin{abstract}
  Given a positive, non-increasing sequence $a$ with finite sum equal to $1$, we consider the family of all closed subsets of $[0,1]$ whose complementary open intervals have lengths given by a rearrangement of the sequence $a$. We study the full range of possible $\theta$-intermediate dimensions of these sets and, under suitable assumptions on the sequence, we show that this range forms a closed interval, whose endpoints we compute explicitly. This paper fills a gap in the literature concerning the dimensional properties of complementary sets.
\end{abstract}
\maketitle

\section{Introduction}

The intermediate dimensions $\underline{\dim}_\theta$ and $\overline{\dim}_\theta$ form a continuum of dimensions that were introduced in \cite{FFK} as a way to interpolate between the Hausdorff and box-counting dimensions; see Definition \ref{adef} for the precise definition. These dimensions have been extensively studied in various contexts, such as in \cite{Brownian1} for stochastic processes, \cite{BFF} and \cite{angelini} for results related to projections of sets, \cite{BFF2} for elliptic polynomial spirals, and more recently in \cite{angelini2}, where these dimensions were extended to measures. These dimensions are continuous (except perhaps at $\theta=0$) and increase as a function of $\theta$, and the following relations hold:
\[\dim_H E \leq \da E \leq \uda E \leq \ubd \quad \text{and} \quad \da E \leq \lbd E,\]
where $E \subset \mathbb{R}^d$ is compact, and $\dim_H$ and $\dim_B$ represent the Hausdorff and box dimensions, respectively. Moreover, \cite{Banaji-R22} provides a characterization of a function $h(\theta)$ as the dimension function associated with a set $E$. These dimensions were further generalized in \cite{Banaji} to more general settings.

Given $a=\{a_n\}$, a positive non-increasing sequence with a finite sum equal to $1$, the class $\mathcal{C}_a$ is defined as the family of all closed subsets of $[0,1]$ whose complement in $[0,1]$ is a union of disjoint open intervals of lengths given by the entries of $a$. The sets in $\mathcal{C}_a$ are called \textit{complementary sets} of $a$. They can be countable, for example, a sequence of points, and uncountable, for example, Cantor sets. So it is natural to ask about the possible dimensions of the sets belonging to $\mathcal{C}_a$.

In the case of Hausdorff dimension, the problem was first studied in 1954 by Besicovitch and Taylor in \cite{BeyTa} and was later studied more deeply in \cite{cabrelli2010classifying} and \cite{garcia2007dimension}. In fact, it was proved that the set of possible values for the Hausdorff dimension of the sets in $\mathcal{C}_a$ is the closed interval $[0,\dim_H C_{a}]$, where $C_{a}$ denotes the Cantor set associated with $a$, see section \ref{Comp sets} for the precise definition. A similar result for the Packing measure was obtained in \cite{hare2013sizes} by Hare, Mendivil, and Zuberman.

  For the case of the box dimensions, in \cite[Chapter 3.2]{falconer1997techniques} it is shown that there exist numbers $s \leq t$, depending only on the sequence $a$, such that every complementary set $E \in \mathcal{C}_a$ has lower and upper box dimensions $
\lbd E = s $ and  $\ubd E = t$.
In particular, these values depend only on $a$ and not on the specific rearrangement of its gaps.

 More recently, Garcia, Hare and Mendivil studied the problem for Assouad and lower Assouad dimensions. They computed the maximum and minimum possible values for the Assouad dimension of complementary sets and, under certain assumptions about the decay of the sequence $a$, proved that the set of attainable values for the Assouad dimension of sets in $\mathcal{C}_a$ is, in fact, also a closed interval.

In this paper, we study the problem from the perspective of $\theta$-intermediate dimensions. Specifically, we compute the $\theta$-intermediate dimension of the countable set $D_a$ (see Section \ref{Comp sets} for the precise definition) and, under natural conditions on the decay of the sequence $a$, we are able to compute the $\theta$-intermediate dimension of the Cantor set $C_a$. 

Following these computations, we establish that the minimum and maximum possible values for the $\theta$-intermediate dimension of any complementary set $E \in \mathcal{C}_a$ are given by the $\theta$-intermediate dimensions of $D_a$ and $C_a$, respectively. More formally, for all $\theta \in [0, 1]$, we have the following inequalities:
$$\uda D_a \leq \uda E \leq \uda C_a.$$ Similarly, the analogous result holds for the lower $\theta$-intermediate dimension.

Finally, we address the natural question of whether all possible values of $\theta$-intermediate dimensions for sets in $\mathcal{C}_a$ are attained within the intervals $[\underline{\dim}_\theta D_a, \underline{\dim}_\theta C_a]$ and $[\overline{\dim}_\theta D_a, \overline{\dim}_\theta C_a]$. We prove that this is indeed the case under certain natural geometric conditions on the sequence $a$.

Let $s_n$ denote the average length of the closed intervals at the $n$-th step in the construction of the Cantor set $C_a$. Our hypotheses are motivated by known results for Assouad and lower Assouad dimensions: the Assouad dimensions of complementary sets form a closed interval if the sequence $a$ is doubling, while the lower Assouad dimensions form a closed interval if $2 < \inf_n s_n/s_{n+1}$ (see \cite{garcia2018assouad}). These two conditions—which together require that the ratios $s_n/s_{n+1}$ be bounded away from both 2 and infinity—ensure that the intermediate dimensions of complementary sets also form a closed interval. Specifically, fixed $\theta$, we show that under these conditions,
\[
\{ \underline{\dim}_\theta E : E \in \mathcal{C}_a \} = [\underline{\dim}_\theta D_a, \underline{\dim}_\theta C_a]
\]
and
\[
\{ \overline{\dim}_\theta E : E \in \mathcal{C}_a \} = [\overline{\dim}_\theta D_a, \overline{\dim}_\theta C_a].
\]
That is, fixed $\theta\in[0,1]$, for every $t\in[\da D_a,\da C_a]$ (respectively for the upper intermediate dimension), there exists a set $E \in \mathcal{C}_a$, that depends on $\theta$ and $t$, whose lower (respectively upper) $\theta$-intermediate dimension equals exactly $t$.

As previously discussed, complementary sets have already been studied extensively with respect to Hausdorff, packing, box-counting, Assouad, and lower Assouad dimensions. 

This paper complements those results by providing a detailed analysis of the intermediate dimensions of sets in 
$\mathcal{C}_a$  for an arbitrary sequence 
$a$, characterizing the full range of possible values.

\section{Preliminaries}

\subsection{Definitions}

Given a non-empty set $A \subset \mathbb{R}$, we write $|A|$ for the diameter of the set.\\ We write \(a_n \asymp b_n\) to mean that there exist constants \(c_1, c_2 > 0\), independent of \(n\), such that 
\(c_1 b_n \leq a_n \leq c_2 b_n\) for all \(n\).

The Hausdorff dimension of $A$ will be denoted by $\dim_H A$, and $\lbd A$ and $\ubd A$ will denote the lower and upper box-counting dimensions of $A$, respectively. The reader can refer to \cite{FBook} for more information on these dimensions.

This document concerns the $\theta$-intermediate dimensions, which were introduced in \cite{FFK} and are defined as follows.

\begin{defi}\label{adef}
Let $F \subseteq \mathbb{R}$ be bounded. For $0 \leq \theta \leq 1$, we define the \emph{lower $\theta$-intermediate dimensions} of $F$ by
\begin{align*}
 \da F =  \inf \big\{& s \geq 0 : \ \forall\ \epsilon >0, \ \text{and all }\ \delta_0>0, \text{there}\ \exists\ 0<\delta\leq \delta_0, \ \text{and } \\
 & \{U_i\}_{i \in I}\  : F \subseteq \cup_{i \in I} U_i \ : \delta^{1/\theta} \leq  |U_i| \leq \delta \ \text{and }\ \sum_{i \in I} |U_i|^s \leq \epsilon  \big\}.
\end{align*}

Similarly, we define the \emph{upper $\theta$-intermediate dimensions} of $F$ by
\begin{align*}
\uda F =  \inf \big\{& s \geq 0 : \ \forall\ \epsilon >0, \ \text{there}\ \exists\ \delta_0>0, \ : \ \forall\ 0<\delta\leq \delta_0, \ \text{there}\ \exists \\
 & \{U_i\}_{i \in I}\  : F \subseteq \cup_{i \in I} U_i \ : \delta^{1/\theta} \leq  |U_i| \leq \delta \ \text{and }\ \sum_{i \in I} |U_i|^s \leq \epsilon  \big\}.
\end{align*}
\end{defi}

For all $\theta \in (0,1]$ and a compact set $A \subset \mathbb{R}$, we have $\dim_H A \leq \uda A \leq \ubd A$ and similarly with the lower $\theta$ dimension, replacing $\ubd$ with $\lbd$. This spectrum of dimensions has the property of being continuous for $\theta \in (0,1]$, and sometimes it is also continuous at $\theta=0$. The reader can find more information in \cite{FFK}.

\begin{rmk}
    It is straightforward to note that the value of $\da F$ and $\uda F$ remains unchanged if the covering condition $\delta^{1/\theta} \le |U_i| \le \delta$ is replaced by $c_1\delta^{1/\theta} \le |U_i| \le c_2\delta$ for any fixed constants $c_1, c_2 > 0$.
\end{rmk}

We also need to recall the definitions of the Assouad dimension and the lower Assouad dimension. Given a non-empty set $E \subset \mathbb{R}$, $N_r(E)$ will denote the minimum number of closed balls of radius $r$ needed to cover $E$.

The \emph{Assouad dimension} of $E$ is defined as 
\begin{align*}
    \dim_A E= \inf\{ \alpha : & \text{ there exist constants } c,\rho>0 \text{ such that }\\
    & \sup_{x \in E} N_r (B(x,R) \cap E) \leq c\left(\frac{R}{r}\right)^\alpha \text{ for all } 0<r<R<\rho \}.
\end{align*}
In the same way, the \emph{lower Assouad dimension} of $E$ is defined as
\begin{align*}
    \dim_L E= \sup\{ \alpha : & \text{ there exist constants } c,\rho>0 \text{ such that }\\
    & \inf_{x \in E} N_r (B(x,R) \cap E) \geq c\left(\frac{R}{r}\right)^\alpha \text{ for all } 0<r<R<\rho \}.
\end{align*}

The following inequalities hold for compact sets:
$$\dim_L E \leq \dim_H E \leq \da E\leq \uda E\leq \ubd E\leq \dim_A E.$$

A key inequality involving these dimensions, which we will use later, is given in the following result. 
\begin{prop} [Corollary 2.8 \cite{Banaji-R22}] \label{banajis bound}
  
If $\dim_L E<\dim_A E$, then for all $\theta \in (0,1]$,
\[
     \uda E \geq \frac{\theta\dim_A E (\ubd E - \dim_L E) + \dim_L E (\dim_A E - \ubd E)}{\theta(\ubd E - \dim_L E) + (\dim_A E-\ubd E)}.
\]
The same holds replacing $\uda$ and $\ubd$ with $\da$ and $\lbd$ everywhere.
\end{prop}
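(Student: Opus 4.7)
The plan is to derive the inequality from two refinement estimates on covers, combined with lower bounds on covering numbers obtained from $\ubd E$ and $\dim_L E$. Write $A = \dim_A E$, $B = \ubd E$, $L = \dim_L E$, and suppose $s > \uda E$. By the definition of $\uda$, for every $\epsilon > 0$ and all sufficiently small $\delta > 0$ there is a cover $\{U_i\}_{i\in I}$ of $E$ with $\delta^{1/\theta} \leq |U_i| \leq \delta$ and $\sum_{i\in I} |U_i|^s \leq \epsilon$. The uniform size bounds immediately give
\[
|I| \leq \epsilon \delta^{-s/\theta} \quad \text{and} \quad \sum_i |U_i|^A \leq \epsilon \delta^{A-s}.
\]

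The first refinement inequality comes from re-covering $E$ at an intermediate scale $r \in [\delta^{1/\theta}, \delta]$: the Assouad dimension controls how many $r$-balls are needed to cover each $U_i$, yielding
\[
N_r(E) \leq Cr^{-A}\sum_i |U_i|^A + |I| \leq \epsilon\bigl(Cr^{-A}\delta^{A-s} + \delta^{-s/\theta}\bigr).
\]
Choosing $r = r_k$ to be a scale along which the upper box dimension is attained, so that $N_{r_k}(E) \geq c r_k^{-B + o(1)}$, and writing $r_k = \delta^\tau$ with $\tau \in [1, 1/\theta]$, this becomes a polynomial inequality in $\delta$ linking $s$ and $\tau$. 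The second ingredient is the lower Assouad dimension, which propagates the good-scale bound to finer scales: a standard disjoint-packing argument gives $N_r(E) \geq c(r_k/r)^L N_{r_k}(E)$ for $r \leq r_k$, so in particular $N_{\delta^{1/\theta}}(E) \geq c\, r_k^{L-B}\delta^{-L/\theta}$. Combined with the Assouad refinement at the minimal scale, $N_{\delta^{1/\theta}}(E) \leq C\delta^{-A/\theta}\sum_i |U_i|^A \leq C\epsilon\delta^{A - A/\theta - s}$, this produces a second polynomial inequality in $\delta$ that now explicitly involves $L$.

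Both inequalities must hold as $\delta \to 0$ for every fixed $\epsilon$, forcing the relevant exponents to be non-negative. The problem then reduces to a one-parameter optimisation over $\tau \in [1, 1/\theta]$: balancing the two terms in the first inequality identifies a critical $\tau^*$, and substituting into the second (which brings $L$ into play) yields the claimed lower bound $s \geq [\theta A(B-L) + L(A-B)]/[\theta(B-L) + (A-B)]$. The analogous argument with $\da$ and $\lbd$ in place of $\uda$ and $\ubd$ proves the lower variant. The hard part will be the simultaneous balancing of the two inequalities: when $L = 0$ only the first binds and the formula collapses to $AB\theta/(A - B + B\theta)$, whereas for $L > 0$ the second inequality strictly tightens the bound, and verifying that the algebraic optimisation produces exactly the stated fractional-linear expression requires careful parameter tracking and uniform control of constants across the sequence of good scales.
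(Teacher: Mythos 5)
First, note that the paper does not prove this proposition at all: it is quoted verbatim as Corollary 2.8 of Banaji--Rutar, so there is no internal proof to match your argument against; what follows compares your sketch with what is actually needed.

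Your ingredients are individually sound (the counting bounds $|I|\le\epsilon\delta^{-s/\theta}$ and $\sum_i|U_i|^A\le\epsilon\delta^{A-s}$, the Assouad refinement of a cover to a finer common scale, and the packing estimate $N_r(E)\ge c(R/r)^{\dim_L E}N_R(E)$), and with $L=0$ this is essentially the Falconer--Fraser--Kempton proof of the bound $\uda E\ge \theta AB/(A-(1-\theta)B)$. The gap is in the final optimisation, which you defer: carrying it out shows that your two families of inequalities cannot reach the stated constant. Writing $r_k=\delta^{\tau}$ with $\tau\in[1,1/\theta]$, your first inequality forces $s\ge\min\bigl(\tau(B-A)+A,\ \theta\tau B\bigr)$ and your second forces $s\ge A-(A-L)/\theta+\tau(B-L)$; one checks that the second right-hand side is bounded above by the first on all of $[1,1/\theta]$ (they agree at $\tau=1/\theta$ and the second is smaller at $\tau=1$), so maximising over $\tau$ returns exactly the $L$-free bound $\theta AB/(A-(1-\theta)B)$ and the lower Assouad dimension contributes nothing. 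Concretely, with $A=1$, $B=1/2$, $L=1/4$, $\theta=1/2$ the target is $2/5$, your first inequality peaks at $1/3$ at $\tau^{*}=4/3$, and substituting $\tau^{*}$ into your second inequality gives $1-\tfrac{3/4}{1/2}+\tfrac{4}{3}\cdot\tfrac14=-\tfrac16$, which is vacuous. Propagating the box-counting lower bound from the good scale $r_k$ down to $\delta^{1/\theta}$ loses a factor that exactly cancels the gain, so this route provably stops at the Assouad-only bound.

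The mechanism by which $\dim_L E$ genuinely improves the estimate is different. Banaji--Rutar first establish the pointwise differential inequality on the dimension profile $h(\theta)=\uda E$,
\[
\limsup_{\epsilon\to0^{+}}\frac{h(\theta+\epsilon)-h(\theta)}{\epsilon}\ \le\ \frac{(h(\theta)-\dim_L E)(\dim_A E-h(\theta))}{(\dim_A E-\dim_L E)\,\theta},
\]
via a two-scale covering argument in which the lower dimension bounds from below the number of sets of the finer admissible scale needed inside each set of the coarser cover (so that an efficient cover at one value of $\theta$ forces a not-too-expensive cover at a nearby value); integrating this ODE-type bound backwards from $\theta=1$, where $h(1)=\ubd E$, yields precisely the fractional-linear expression in the statement, since the solution curves of the comparison equation satisfy $(h-\dim_L E)/(\dim_A E-h)=c\theta$. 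If you want a self-contained proof you should aim at this derivative estimate (or at the equivalent multi-scale inequality relating $\uda E$ at two parameters $\theta<\phi$), not at a single-$\delta$ counting argument.
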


\section{Complementary Sets}\label{Comp sets}
Let $a=\{a_n\}$ be a positive, non-increasing, summable sequence with sum $1$. Let $I_a=[0,1]$. $\mathcal{C}_a$ will denote the family of all closed sets $E$ contained in $I_a$ that are of the form $E=I_a \setminus \displaystyle\cup_{j \geq 1} U_j$, where $\{U_j\}$ is a disjoint family of open intervals contained in $I_a$ such that $|U_j|=a_j$ for all $j$. The sets in $\mathcal{C}_a$ are called the complementary sets of $a$. Note that if $\mathbf{a}=\{\mathbf{a}_n\}$ is any rearrangement of the sequence $\{a_n\}$, then $\mathcal{C}_a=\mathcal{C}_{\mathbf{a}}$.

An important set in $\mathcal{C}_a$ is the \emph{countable} set, defined as follows:
$$D_a=\left\{\sum_{i=k}^{\infty} a_i\right\}_{k\in\mathbb{N}}.$$ 
As the reader notes, $D_a$ is a countable, decreasing set of points with decreasing gaps given by the elements of the sequence $a$.

Another set of great importance in this family is the non-increasing Cantor set associated with the sequence $a$, which is constructed inductively in the following way. In the first step, remove from $I_a$ an open interval of length $a_1$, leaving two closed intervals $I_{1,1}$ and $I_{1,2}$. Having performed the $k$-th step, we have $2^k$ closed intervals $I_{k,1},...,I_{k,2^{k}}$ contained in $I_a$. Then, from each interval $I_{k,j}$, remove an open interval of length $a_{2^k +j-1}$ to obtain two closed intervals $I_{k+1,2j -1}$ and $I_{k+1,2j}$. Finally, define the Cantor set as $$C_a := \displaystyle\bigcap_{k=1}^{\infty}\bigcup_{j=1}^{2^{k}} I_{k,j}.$$

Given a non-increasing, summable sequence $a=\{a_n\}$, define
\[
\displaystyle x_n(a)=\sum_{i=n}^{\infty}a_i
\quad \text{ and }\quad s_n(a)=2^{-n} x_{2^n}(a).\]

When the sequence $a$ is fixed and clear from context, we will simply write $s_n$ instead of $s_n (a)$.

Note that since $a$ is non increasing we have the estimate  
\[s_{n+1} < |I_{n,j}| < s_{n-1}
\quad \text{for all $1 \leq j \leq 2^n$,}\]
where $I_{n,j}$ are the closed intervals of step $n$ in the construction of the Cantor set $C_{a}$.

The Hausdorff dimension of sets in $\mathcal{C}_a$ was studied in \cite{BeyTa}, \cite{cabrelli2010classifying}, and \cite{garcia2007dimension}. The following proposition holds:

\begin{prop}\label{hausdorff cantor}
Let $a = \{a_n\}$ be a positive summable sequence, and let $C_a$ be the decreasing Cantor set associated with $a$. Then
\[
\dim_H C_{a} = \liminf_{n \to \infty} \frac{\log n}{-\log (x_n/n)}.
\]
Furthermore, any set $E \in \mathcal{C}_{a}$ satisfies $\dim_H E \leq \dim_H C_{a}$. Conversely, given any $s \in [0, \dim_H C_{a}]$, there exists a set $E \in \mathcal{C}_a$ such that $\dim_H E = s$.
\end{prop}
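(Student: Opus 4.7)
Write $\alpha := \liminf_{n\to\infty}\log n/\bigl(-\log(x_n/n)\bigr)$. The plan is to split the argument into three parts: the upper bound $\dim_H E\le\alpha$ for every $E\in\mathcal{C}_a$ (which in particular gives $\dim_H C_a\le\alpha$), the matching lower bound $\dim_H C_a\ge\alpha$, and the attainability of every value in $[0,\dim_H C_a]$.

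For the upper bound, given $E\in\mathcal{C}_a$ I would remove from $[0,1]$ the $n-1$ longest gaps of $E$; what remains is a family of at most $n$ disjoint closed intervals $\{J_i\}$ covering $E$, with $\sum_i|J_i|=x_n$ and $\max_i|J_i|\le x_n\to 0$. For $s\in(0,1]$, concavity of $t\mapsto t^s$ (Jensen) yields
\[
\sum_i|J_i|^s \le n^{1-s}x_n^s = n\bigl(x_n/n\bigr)^s.
\]
Picking $s>s'>\alpha$, the definition of $\alpha$ produces a subsequence $n_k$ with $n_k^{1-s'}x_{n_k}^{s'}\le 1$, hence $n_k^{1-s}x_{n_k}^s\le n_k^{\,1-s/s'}\to 0$. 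This forces $\mathcal{H}^s(E)=0$ for every $s>\alpha$, so $\dim_H E\le\alpha$.

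For the lower bound on $\dim_H C_a$ I would use the mass distribution principle with the Bernoulli probability measure $\mu$ defined by $\mu(I_{k,j})=2^{-k}$. Given small $r>0$, choose $k$ with $s_{k+1}\le r<s_k$; since each level-$k$ interval has length at least $s_{k+1}$, an interval of length $r<s_k$ meets a bounded number of level-$k$ intervals, so $\mu(B)\le C\cdot 2^{-k}$. For $s<\alpha$, after checking that the liminf in $\alpha$ along all $n$ agrees with the liminf along the powers-of-two subsequence (which follows from the monotonicity of $\log n/(-\log(x_n/n))$ on each dyadic block), one obtains $2^{-k}\le C\,s_k^s\le C\,r^s$, uniformly for small $r$. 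The MDP then gives $\dim_H C_a\ge s$ for every $s<\alpha$.

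For attainability, my plan is to realize each $s\in[0,\dim_H C_a]$ by a rearrangement of $a$ of the form $E=C'\cup D'$, where $\mathbb{N}=A\sqcup B$, $C'\subset[0,L]$ is a Cantor-type set built from the gaps $\{a_n\}_{n\in A}$, and $D'\subset[L,1]$ is a $D_a$-type countable set built from $\{a_n\}_{n\in B}$. Then $\dim_H D'=0$, so $\dim_H E=\dim_H C'$. Varying $A$ (or, more flexibly, redistributing the gaps of $A$ among the levels of the binary construction) should sweep $\dim_H C'$ continuously from $0$ (when $A$ is essentially empty) up to $\alpha$ (when $A=\mathbb{N}$ and the distribution matches $C_a$). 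The main obstacle is making this last step precise: one needs a family of rearrangements whose Hausdorff dimensions vary continuously and together cover all of $[0,\alpha]$, which typically requires an explicit dimension formula for the truncated construction together with a careful index-swapping / intermediate-value argument along the lines of \cite{cabrelli2010classifying,garcia2007dimension}.
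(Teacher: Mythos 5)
The paper does not actually prove Proposition~\ref{hausdorff cantor}; it imports it from \cite{BeyTa}, \cite{cabrelli2010classifying} and \cite{garcia2007dimension}, so there is no internal proof to compare against. Judged on its own, your first two parts follow the classical Besicovitch--Taylor scheme and are essentially correct, but three local points need repair. (i) In the mass-distribution step, the bounded multiplicity of a ball of radius $r<s_k$ against the level-$k$ intervals does \emph{not} follow from those intervals having length at least $s_{k+1}$: that reasoning only caps the count by something of order $s_k/s_{k+1}$, which is unbounded for a general summable $a$ (and the proposition assumes nothing more). The correct argument is the sibling-pair one the paper itself uses in proving Theorem~\ref{dimension Cantor}: an interval meeting five consecutive level-$(k-1)$ intervals must contain a whole level-$(k-2)$ interval, hence have length at least $s_{k-1}$, whereas $2r<2s_k<s_{k-1}$ always holds; this gives at most $4$ intervals at level $k-1$ and $8$ at level $k$. (ii) Your closing chain $2^{-k}\le C\,s_k^s\le C\,r^s$ is backwards, since $r<s_k$ makes $s_k^s>r^s$; you must instead use $r\ge s_{k+1}$ and the bound $2^{-(k+1)}\le s_{k+1}^s$, yielding $\mu(B)\le 16\,s_{k+1}^s\le 16\,r^s$. (iii) The identification of $\liminf_n \log n/(-\log(x_n/n))$ with its dyadic version is not by monotonicity on dyadic blocks (the ratio need not be monotone there); it follows from the squeeze $s_{k+1}\le x_n/n\le s_k$ and $k\log 2\le\log n\le(k+1)\log 2$ for $2^k\le n\le 2^{k+1}$.

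The genuine gap is the attainability claim. What you offer there is a plan --- split $E=C'\cup D'$ and ``sweep'' $\dim_H C'$ over $[0,\alpha]$ by redistributing gaps --- followed by an explicit admission that the continuity/intermediate-value step is the hard part. That step is precisely the content of the cited results and cannot be deferred: for each $s\in(0,\dim_H C_a)$ one must exhibit a concrete rearrangement and prove \emph{both} bounds for it, the lower one again via a mass distribution whose scaling must be computed from the chosen subsequence of gaps; there is no a priori reason the dimension of the truncated construction varies continuously in whatever parameter indexes your family $A$. (Note in passing that the inequality $\dim_H E\le\dim_H C_a$ for every $E\in\mathcal{C}_a$ is already fully delivered by your first part, since that covering argument never uses the arrangement of the gaps.) As written, the third part is an outline, not a proof.
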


Moreover, it was shown in \cite[Proposition 3.3]{garcia2007dimension} that $\dim_H C_a = \lbd C_a$.

The box dimensions of sets in $\mathcal{C}_a$ were carefully studied in \cite[Chapter 3.2]{falconer1997techniques}. There, Falconer shows that the upper and lower box dimensions of sets in $\mathcal{C}_a$ depend only on the sequence $a$, and not on the rearrangement. That is, given any complementary set, its lower box dimension equals that of every other complementary set, and the same holds for the upper box dimension. Furthermore, for any $E \in \mathcal{C}_a$,
\[
\liminf_{n \to \infty} \frac{\log n}{-\log a_n} \leq \lbd E \leq \ubd E \leq \limsup_{n \to \infty} \frac{\log n}{-\log a_n},
\]
and the box dimension exists if and only if these limits coincide.

The upper box dimension of $C_a$ (and therefore of any other complementary set in $\mathcal{C}_a$) is given by 
\[
\limsup_{n \to \infty} \frac{\log n}{-\log (x_n/n)},
\]
which, as shown in \cite{tricot1994curves}, is equal to 
\[
\limsup_{n \to \infty} \frac{\log n}{-\log a_n}.
\]
This equivalence does not hold for the lower box dimension. In fact, the inequality
\[
\liminf_{n \to \infty} \frac{\log n}{-\log a_n} \leq \liminf_{n \to \infty} \frac{\log n}{-\log (x_n/n)}
\]
can be strict; see, for instance, \cite[Proposition 5]{cabrelli2004hausdorff}.
The Assouad dimension and the lower Assouad dimension of sets in \( \mathcal{C}_a \) were studied in \cite{garcia2018assouad}. There, the authors prove that, given a non-increasing summable sequence \( a = \{a_n\} \),
\[
\dim_A C_a = \limsup_{n \to \infty} \left( \sup_k \frac{n \log 2}{\log(s_k / s_{k+n})} \right),
\]
and
\[
\dim_L C_a = \liminf_{n \to \infty} \left( \inf_k \frac{n \log 2}{\log(s_k / s_{k+n})} \right).
\]
Moreover, it was shown that \( \dim_A D_a = 0 \) or \( 1 \), and if \( E \) is any complementary set in \( \mathcal{C}_a \), then
\[
\dim_A C_a \leq \dim_A E \leq \dim_A D_a,
\]
and
\[
0 = \dim_L D_a \leq \dim_L E \leq \dim_L C_a.
\]

Furthermore, if the sequence \( a \) is doubling, i.e., there exists a constant \( c > 0 \) such that \( a_n \leq c \, a_{2n} \) for all \( n \), then
\[
\{\dim_A E: E \in \mathcal{C}_a\} = [\dim_A C_a, 1],
\]
and if instead of doubling, we assume that \( \inf \frac{s_n}{s_{n+1}} > 2 \), then
\[
\{\dim_L E: E \in \mathcal{C}_a\} = [0, \dim_L C_a].
\]
The present paper fills a gap in the theory of complementary sets by proving equivalent results through the 
$\theta-$intermediate dimension.
\section{$\theta-$intermediate dimension of $C_a$ and $D_a$}
In this section, we compute the lower and upper $\theta$-intermediate dimensions of the set $D_a$.

Under natural assumptions on the decay of the ratios $s_n$, we also determine the lower and upper intermediate dimensions of the Cantor set $C_a$. To this end, for a fixed scale $s_n$, we consider coverings by sets whose diameters lie within the admissible range determined by $\theta$.

Specifically, we aim to minimize the quantity  
\(
\frac{-m \log 2}{\log s_m},
\) 
where the index $m$ ranges over all the values satisfying 
\(
s_n^{1/\theta} \leq s_m \leq s_n.
\)  
In other words, the goal is to minimize this expression over all admissible diameters $s_m$ that may be used to cover the set at scale $s_n$, in accordance with the $\theta$-intermediate dimension constraints.

This leads naturally to the introduction of two parameters, which will aid in analyzing the minimal value of the above expression within the admissible range.
\begin{defi}\label{rho}
Let $a = \{a_n\}_{n \in \mathbb{N}}$ be a non-increasing summable sequence, let $\theta \in (0,1]$ and $r \in \mathbb{N}$. We define $\gamma_{\theta,a}(r)$ and $\rho_{\theta,r}(r)$ as follows:
\[
\gamma_{\theta,a}(r) := \max \left\{ m \in \mathbb{N} : s_m \geq s_r^{1/\theta} \right\},
\]
and
\[
\rho_{\theta,a}(r) := \argmin_{r \leq m \leq \gamma(r)+1} \left\{ s_m^{1/m} \right\},
\]
that is, $\rho_{\theta,a}(r)$ is the integer between $r$ and $\gamma_{\theta,a}(r)+1$ for which the value $s_m^{1/m}$ is minimal.
\end{defi}

A consequence of the following result is that the best possible covering strategy 
for $C_a$ uses sets of some fixed radius within the allowed range $[\delta^{1/\theta}, \delta]$.
\begin{theo}\label{dimension Cantor}
Let $a = \{a_n\}_{n \in \mathbb{N}}$ be a non increasing summable sequence. Suppose that \(\lim_{n\to\infty} (s_n/s_{n+1})^{1/n}=1\). Then, for all $\theta \in (0,1]$, we have
\begin{equation}\label{Th inciso 1} \overline{\dim}_\theta C_a=
    \limsup_{n \to \infty} \frac{-\rho_{\theta,a}(n) \log 2}{ \log s_{\rho_{\theta,a}(n)}}.
\end{equation}

\end{theo}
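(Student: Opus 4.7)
The plan is to work at the dyadic scales $\delta=s_n$, since by the remark following Definition~\ref{adef} (and the hypothesis, which makes consecutive $s_n$'s sub-exponentially close) any admissible cover at a scale $\delta\in[s_{n+1},s_n]$ may be compared with one at scale $s_n$. Write $T_m:=-m\log 2/\log s_m$, so that the stated limit is $\limsup_n T_{\rho_{\theta,a}(n)}$, and observe that the trivial bound $x_{2^m}\le 1$ gives $s_m\le 2^{-m}$ and hence $T_m\le 1$ for every $m$.

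For the upper bound, I would fix $n$ large and set $m = \rho_{\theta,a}(n)$. The $2^m$ closed intervals $\{I_{m,j}\}$ of step $m$ in the construction of $C_a$ cover $C_a$, and their diameters $|I_{m,j}|\in[s_{m+1},s_{m-1}]$ fit, up to sub-exponential distortion, inside the admissible window $[s_n^{1/\theta},s_n]$ because $m\in[n,\gamma_{\theta,a}(n)+1]$. Writing $s_{m-1}=s_m\,e^{o(m)}$ via the hypothesis,
\[
\sum_{j=1}^{2^m} |I_{m,j}|^s \;\le\; 2^m s_m^s \, e^{o(m)}.
\]
For $s>T_m$ the factor $2^m s_m^s$ is less than $1$, and for $s>T_m+\eta$ (with $\eta>0$ fixed) it decays exponentially in $m$, easily absorbing the $e^{o(m)}$ factor. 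Taking limsup in $n$ along the subsequence realising the limsup of $T_{\rho(n)}$ yields $\uda C_a\le \limsup_n T_{\rho_{\theta,a}(n)}$.

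For the lower bound I would use a mass distribution argument. Define a Borel probability $\mu$ on $C_a$ that assigns mass $2^{-k}$ to each step-$k$ interval $I_{k,j}$. For any $U\subset\bbbr$ with $s_m\le|U|<s_{m-1}$, counting the step-$m$ intervals (each of length at least $s_{m+1}$) that $U$ can meet yields $\mu(U)\le f_m\,2^{-m}$, where $f_m:=s_{m-1}/s_{m+1}+2$ satisfies $f_m=e^{o(m)}$ by the hypothesis. For any admissible cover $\{U_i\}$ of $C_a$ at scale $\delta=s_n$, grouping the $U_i$ by level $m$ and letting $N_m$ count those at level $m$, the inequalities $\mu(C_a)=1\le\sum_m N_m f_m 2^{-m}$ and $\sum_i |U_i|^s\ge\sum_m N_m s_m^s$ combine (by a one-dimensional LP minimising $\sum N_m s_m^s$ subject to the mass constraint) into
\[
\sum_i |U_i|^s \;\ge\; \min_{m\in[n,\gamma_{\theta,a}(n)+1]} \frac{2^m s_m^s}{f_m}.
\]
If $s<\limsup_n T_{\rho_{\theta,a}(n)}$, choose $\eta>0$ and a subsequence $n_k\to\infty$ with $s<T_{\rho(n_k)}-\eta$; since $\rho(n_k)$ attains $\min_m T_m$ over admissible $m$, one actually has $s<T_m-\eta$ for every admissible $m$ at scale $n_k$. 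Using $T_m\le 1$, a short calculation gives $2^m s_m^s\ge 2^{\eta m}$ for each such $m$, and since $f_m=e^{o(m)}$, the right-hand side above tends to $\infty$ along the subsequence. This forces $\sum_i |U_i|^s\to\infty$, hence $\uda C_a\ge \limsup_n T_{\rho_{\theta,a}(n)}$.

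The main technical obstacle is the level-counting estimate $\mu(U)\le f_m 2^{-m}$ together with the verification that $f_m=e^{o(m)}$: both rely crucially on the hypothesis $(s_n/s_{n+1})^{1/n}\to 1$, which is exactly what is needed to keep the sub-exponential distortion between $s_{m-1}$, $s_m$ and $s_{m+1}$ from disturbing the limsup. One must also check carefully that $m=\rho_{\theta,a}(n)$ really is the effective optimum in the LP step (so the minimum is not achieved on the boundary of the admissible range in a way that breaks the estimate), which again uses the slowness of the ratios $s_m/s_{m+1}$ provided by the hypothesis.
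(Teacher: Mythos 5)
Your proposal is correct and follows essentially the same strategy as the paper's proof: the upper bound covers $C_a$ by the step-$\rho_{\theta,a}(n)$ intervals and invokes the hypothesis $(s_n/s_{n+1})^{1/n}\to 1$ to absorb the sub-exponential distortions at the endpoints of the admissible window, while your mass-distribution/LP lower bound (with the measure assigning each step-$m$ interval mass $2^{-m}$) is a repackaging of the paper's direct count of how many step-$j$ intervals each $U_i$ can meet, both resting on the inequality $2^{-m}\le s_m^{s'}$ for every admissible $m$, which is precisely the defining property of $\rho_{\theta,a}(n)$ as an argmin. The only slip is one of wording: in the upper bound the decay of $2^{m}s_m^{s}$ must hold for all sufficiently large $n$ (the definition of $\uda$ quantifies over all small $\delta$), not merely along the subsequence realising the limsup, but this is immediate since $s$ exceeding the limsup forces $s>\frac{-\rho_{\theta,a}(n)\log 2}{\log s_{\rho_{\theta,a}(n)}}+\eta$ for all large $n$.
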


\begin{proof}
  Our first goal is to show that the limsup on the right-hand side of \eqref{Th inciso 1} is less than or equal to the $\theta$-intermediate dimension of $C_a$.

To do this, let $\theta\in(0,1]$ and note that if $\uda C_a = 0$, then Proposition \ref{banajis bound} implies that $\ubd C_a = 0$, and therefore
\[
\limsup_{n \to \infty} \frac{-n \log 2}{\log(s_n)} = 0,
\]
which in turn implies that the right-hand side of \eqref{Th inciso 1} is also equal to $0$.
Now, suppose that $\uda C_a > 0$.

For simplicity of notation, we will write \( \rho(n) \) and \( \gamma(n) \) instead of \( \rho_{\theta,a}(n) \) and \( \gamma_{\theta,a}(n) \), respectively.

Let $\delta_0 > 0$ and let $K$ be such that $s_K < \delta_0$, and fix $s' < \displaystyle\limsup_{n \rightarrow \infty} \frac{-\rho(n)\log(2)}{\log(s_{\rho(n)})}$.

\begin{equation}\label{s'}\text{Then there exists $k' \geq K$ such that}\quad   
s' < \frac{ -\rho(k')\log(2)}{\log(s_{\rho(k')})} .
\end{equation}

Note that by the definition of $\rho$ in (\ref{rho}), we have
\[
\frac{-\rho (k') \log (2)}{\log ( s_{\rho (k')} )} \leq \frac{-m \log (2)}{\log ( s_{m} )}\quad \text{for all $k' \leq m \leq \gamma(k') + 1$}.
\]

Let $\delta = s_{k'} < \delta_0$, and let $\{U_i\}$ be any finite cover of $C_a$ such that $\delta^{1/\theta} \leq |U_i| \leq \delta$ for all $i$. Then we have $(s_{k'})^{1/\theta} \leq |U_i| \leq s_{k'}$ for all $i$.

For each $i$, let $t(i) \in \mathbb{N}$ be such that $s_{t(i)+1} < |U_i| \leq s_{t(i)}$. Note that $k' \leq t(i) \leq \gamma(k')$.

Then $U_i$ can intersect at most 4 intervals from step $t(i)-1$ in the construction of $C_a$. Indeed, if it were to intersect 5 intervals from step $t(i)-1$, it would necessarily contain at least one interval from step $t(i)-2$, say $I_{t(i)-2,j}$ for some $j$, and hence:
\[
s_{t(i)} \geq |U_i| \geq |I_{t(i)-2,j}| \geq s_{t(i)-1} > s_{t(i)},
\]
since $s_n$ is strictly decreasing.

As a consequence, $U_i$ can intersect at most 8 intervals from step $t(i)$. Therefore, for $j \geq t(i)$, the set $U_i$ can intersect at most
\[
8 \, 2^{j-t(i)} = 16 \, 2^j \, 2^{-(t(i)+1)} \leq 16 \, 2^j \, (s_{t(i)+1})^{s'} \leq 16 \, 2^j \, |U_i|^{s'}
\]
intervals from step $j$.

Finally, let $j$ be sufficiently large so that $s_j < |U_i|$ for all $i$. Since $\{U_i\}_i$ is a cover of $C_a$, it must intersect all the $2^j$ intervals from step $j$, and hence:
\[
2^j \leq \sum_i 16 \, 2^j \, |U_i|^{s'},
\]
which implies that any such cover satisfies:
\[
\sum_i |U_i|^{s'} \geq \frac{1}{16}.
\]

Therefore, $s' \leq \uda C_a$. Letting 
\[
s' \to \limsup_{n \to \infty} \frac{-\rho(n)\log(2)}{\log(s_{\rho(n)})}
\quad \text{we have} \quad \limsup_{n \to \infty} \frac{-\rho(n)\log(2)}{\log(s_{\rho(n)})} \leq \uda C_a.
\]

For the opposite inequality, let  
\[
t > d := \limsup_{n \to \infty} \frac{-\rho(n)\log(2)}{\log(s_{\rho(n)})},
\]
and let \( \delta > 0 \) be arbitrarily small. Let \( k \) be such that \( s_{k+1} \leq \delta \leq s_k \), and suppose that \( \delta^{1/\theta} \leq s_{\rho(k)} \leq \delta \). Since the lengths of the closed intervals at step \( \rho(k)+1 \) in the construction of \( C_a \) are less than \( s_{\rho(k)} \), we can cover \( C_a \) with \( 2^{\rho(k)+1} \) intervals of length \( s_{\rho(k)} \).  

Then  
\[
\inf_{\{U_i\} : \delta^{1/\theta} \leq |U_i| \leq \delta} \sum_i |U_i|^t \leq 2^{\rho(k)+1} s_{\rho(k)}^t < C' < \infty,
\]
for some constant \( C' > 0 \), by our choice of \( t \). The above inequality implies that \( \uda C_a \leq t \) and the result follows by letting \( t \to d \).

Now, if \( s_{\rho(k)} < \delta^{1/\theta} \) or \( \delta < s_{\rho(k)} \), then either \( s_{\rho(k)-1} \) or \( s_{\rho(k)+1} \) belongs to \( [\delta^{1/\theta}, \delta] \). Proceeding as before, we obtain  
\[
\uda C_a \leq \limsup_{n \to \infty} \frac{-\rho(n)\log(2)}{\log(s_{\rho(n)-1})}
\quad \text{or} \quad
\uda C_a \leq \limsup_{n \to \infty} \frac{-\rho(n)\log(2)}{\log(s_{\rho(n)+1})}.
\]  

But, since we are assuming that \( (s_n / s_{n-1})^{1/n} \) tends to \( 1 \) as \( n \to \infty \), we have  
\begin{align*}
\limsup_{n \to \infty} \frac{-\rho(n)\log(2)}{\log(s_{\rho(n)+1})}  
&= \limsup_{n \to \infty} \frac{-\rho(n)\log(2)}{\log(s_{\rho(n)+1} / s_{\rho(n)}) + \log(s_{\rho(n)})}  
\\ &= \limsup_{n \to \infty} \frac{-\rho(n)\log(2)}{\log(s_{\rho(n)})},
\end{align*}
and the same holds for \( s_{\rho(n)-1} \), which completes the proof.
\end{proof}
Note that it is not necessary to compute the lower \( \theta \)-intermediate dimension of \( C_a \), since the equality \( \dim_H C_a = \underline{\dim}_B C_a \) always holds.

In the next theorem, we compute the intermediate dimensions of the countable set $D_a$. 
We note that these dimensions are as small as possible, attaining the minimal value 
permitted by Proposition \ref{banajis bound}.

\begin{theo}\label{dimension de suceciones} 
Let \(a=\{a_n\}_{n\in\mathbb{N}}\) be a non increasing summable sequence. Then for all \(\theta\in [0,1]\), we have
\begin{equation}\label{Theo inciso 11}
\uda D_{\textbf{a}} = \frac{\theta \, \ubd D_{\textbf{a}} }{1 - (1-\theta)\, \ubd D_{\textbf{a}}}
\end{equation}
and  
\[
\da D_{\textbf{a}} = \frac{\theta \, \lbd D_{\textbf{a}} }{1 - (1-\theta)\, \lbd D_{\textbf{a}} }.
\]
\end{theo}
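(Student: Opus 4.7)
The plan is to establish matching upper and lower bounds for both $\uda D_a$ and $\da D_a$. For the lower bounds I would invoke Proposition \ref{banajis bound} directly. From the discussion in Section \ref{Comp sets}, $\dim_L D_a = 0$ and $\dim_A D_a \in \{0,1\}$. The case $\ubd D_a = 0$ forces $\dim_A D_a = 0$ and makes both sides of \eqref{Theo inciso 11} vanish, so there is nothing to prove. Otherwise $\ubd D_a > 0$ implies $\dim_A D_a = 1$, and substituting $\dim_L D_a = 0$ and $\dim_A D_a = 1$ into Proposition \ref{banajis bound} recovers exactly the right-hand sides of the two displayed formulas as lower bounds for $\uda D_a$ and $\da D_a$.

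For the upper bound on $\uda D_a$ I would construct an explicit two-scale covering. Write $\beta = \ubd D_a = \limsup_{n\to\infty}\log n/(-\log a_n)$ and assume $0<\beta<1$ (the edge cases being immediate from $\uda D_a \leq \ubd D_a \leq 1$). Fix small $\eta>0$, so that $a_n \leq n^{-1/(\beta+\eta)}$ for all large $n$, whence $x_{N+1}=\sum_{k>N}a_k \leq C\,N^{1-1/(\beta+\eta)}$ for large $N$. Given small $\delta>0$ and a parameter $\alpha>0$ to be chosen, set $N = \lfloor \delta^{-\alpha}\rfloor$, cover each of $x_1,\dots,x_N$ by one interval of diameter $\delta^{1/\theta}$, and cover the clustered tail $\{x_k\}_{k>N}\subseteq[0,x_{N+1}]$ by $\lceil x_{N+1}/\delta\rceil$ intervals of diameter $\delta$. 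The $s$-cost is controlled by
\[
N\,\delta^{s/\theta}+\Big(\tfrac{x_{N+1}}{\delta}+1\Big)\delta^s \;\leq\; \delta^{s/\theta-\alpha}+C\,\delta^{\alpha(1-\beta-\eta)/(\beta+\eta)+s-1}+\delta^s.
\]
Equating the two leading exponents forces $\alpha=(\beta+\eta)[s(1-\theta)/\theta+1]$, and a direct algebraic manipulation shows the common exponent is nonnegative precisely when $s\geq(\beta+\eta)\theta/[1-(1-\theta)(\beta+\eta)]$. Letting $\eta\to 0$ yields the matching upper bound for $\uda D_a$.

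The argument for $\da D_a$ is structurally identical but must be executed along a subsequence $\delta_k\to 0$ on which the box-counting function of $D_a$ is close to its liminf, i.e.\ $N_{\delta_k}(D_a)\leq \delta_k^{-\lbd D_a-\eta}$; the existence of such a sequence is exactly the definition of $\lbd D_a$. Along this subsequence the sharpened estimate holds with $\beta$ replaced by $\lbd D_a$, and the same balancing yields $\da D_a\leq \theta\lbd D_a/[1-(1-\theta)\lbd D_a]$. I expect the main technical hurdle to be ensuring that the tail sum $x_{N+1}$ is also controlled along this subsequence: the two-scale cover splits $D_a$ into a spread-out part governed by $N_\delta$ and a clustered part governed by $x_{N+1}$, and these depend on \emph{a priori} different asymptotics of the sequence $a$. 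Selecting $\delta_k$ directly in terms of $a$, for instance $\delta_k=a_{n_k}$ for indices $n_k$ realizing $\liminf_n\log n/(-\log a_n)$, should synchronize both estimates so that the same optimization goes through.
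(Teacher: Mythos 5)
Your treatment of $\uda D_a$ is correct and is essentially the paper's own argument: the lower bound comes from Proposition \ref{banajis bound} with $\dim_L D_a=0$ and $\dim_A D_a=1$, and the upper bound from the same two-scale cover (isolated points at the smallest admissible scale, the clustered tail at the largest), with your optimized exponent $\alpha$ agreeing with the paper's choice of $K=\lceil \delta^{-(B+\epsilon)(s+\theta(1-s))}\rceil$ after the change of variables between the conventions $[\delta^{1/\theta},\delta]$ and $[\delta,\delta^\theta]$. (A minor slip: $\ubd D_a=0$ does not force $\dim_A D_a=0$, since Assouad dimension dominates upper box dimension rather than the reverse; but this is harmless because $\uda D_a\leq \ubd D_a=0$ already settles that case.)

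The genuine gap is in the $\da D_a$ part, exactly at the point you flag as the main technical hurdle. Your proposed synchronization, taking $\delta_k=a_{n_k}$ with $n_k$ realizing $\liminf_n \log n/(-\log a_n)$, targets the wrong quantity: as the paper recalls (citing \cite{cabrelli2004hausdorff}), $\liminf_n \log n/(-\log a_n)$ can be strictly smaller than $\lbd D_a=\liminf_n \log n/(-\log(x_n/n))$, so if your estimate went through it would produce an upper bound below the lower bound coming from Proposition \ref{banajis bound}. Moreover, a bound on $a_{n_k}$ along a subsequence gives no control on the tail $x_{n_k}=\sum_{j\geq n_k}a_j$, which is what the cost of covering $[0,x_{N+1}]$ actually depends on. The correct adjustment is to let the split index $N$ range over a subsequence $n_k$ realizing $\liminf_n \log n/(-\log(x_n/n))=\lambda$: for such $n_k$ one has $x_{n_k}\leq n_k^{-(1-\lambda-\epsilon)/(\lambda+\epsilon)}$, which is precisely the tail estimate used in the limsup case (there valid for all large $n$), and one then chooses $\delta_k$ so that $n_k\asymp \delta_k^{-\alpha}$. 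With that substitution the same balancing yields $\da D_a\leq \theta\lambda/(1-(1-\theta)\lambda)$, which is the "suitable adjustment" the paper alludes to without spelling out.
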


\begin{proof}\ We will prove the result for the upper intermediate dimensions. The analogous statement for the lower intermediate dimensions holds with suitable adjustments, replacing the use of the $\limsup$ with that of the $\liminf$.

    First we will prove that the right hand side of equality \eqref{Theo inciso 11} is an upper bound to $\uda D_a$.
    We have \begin{equation}\label{dimb sucecion} B:=\ubd D_{\textit{\textbf{a}}}=\displaystyle\limsup_{n\to\infty}\frac{\log (n)}{-\log(x_n/n)}, \end{equation}

with $x_n=\displaystyle\sum_{i=n}^{\infty}a_i$.    
If $B=0$ there is nothing to prove and if $B=1$ then by Proposition \ref{banajis bound}, $\uda D_{\textit{\textbf{a}}}=1$, so we will suppose that $0<B<1$. 

Let $0<\epsilon<1-B$.  
From \eqref{dimb sucecion} we have that there exists $N$, such that $x_n\leq (1/n)^{\frac{1-(B+\epsilon)}{B+\epsilon}}$.
Let $\delta>0$ be sufficiently small such that $N\leq \lceil \delta^{-(B+\epsilon)(s+\theta(1-s))}\rceil:=K.$ 
To cover $\{x_n\}$ we will cover $\{x_i : 1\leq i \leq K\}$ with $K$ intervals of length $\delta$ and the remaining points $\{x_i : K<i\}\subset [0, x_K]$ with 
$$\left\lceil \frac{x_K}{\delta^{\theta}}\right\rceil\leq \frac{(1/K)^{\frac{(1-(B+\epsilon))}{B+\epsilon}}}{\delta^{\theta} }+1$$ intervals of length $\delta^{\theta}$. 

Hence we have a cover $\mathcal{U}$ of $D_a$ such that $\delta\leq |U|\leq \delta^{\theta}$ for all $U\in \mathcal{U}$ and 
\begin{align*}
    \sum_{U\in\mathcal{U}} |U|^{s}&\leq K\delta^{s}+\left(\frac{(1/K)^{\frac{(1-(B+\epsilon))}{B+\epsilon}}}{\delta^{\theta}} +1\right)\delta^{\theta s}\\
    &\leq (\delta^{-(B+\epsilon)(s+\theta(1-s))}+1)\delta^{s} + \delta^{\theta(s-1)}\delta^{(s+\theta(1-s))(1-(B+\epsilon)))} + \delta^{\theta s}\\
    &=I+I\!I+I\!I\!I
\end{align*}
respectively.

The term $I\!I\!I$ will tend to $0$ as $\delta$ tends to $0$  if and only if $\theta s >0$.

The terms $I$ and $I\!I$ will tend to $0$ as $\delta$ tends to $0$ if and only if $s>\frac{\theta (B+\epsilon)}{1-(1-\theta)(B+\epsilon)}.$

This implies that 
$$\uda D_a\leq \frac{\theta (B+\epsilon)}{1-(1-\theta)(B+\epsilon)},$$
and the result follows letting $\epsilon\to 0.$ 

For the reverse inequality, note that since \( \dim_L D_a = 0 \) and \( \dim_A D_a = 1 \), the result follows directly from Proposition \ref{banajis bound}.

\end{proof}

\section{Maximum and minimum dimensions}

In this section, we will show that \( D_{a} \) and \( C_{a} \) attain the minimum and maximum among all possible values for intermediate dimensions. We first make the following straightforward remark.

\begin{rmk}\label{lemma cubrimiento}
Let $c > 0$. If $\{U_i\}_{i=1}^{m}$ is a collection of $m$ closed intervals such that \(
\sum_{i=1}^{m} |U_i| \leq c \, m,\)
then the union $\bigcup_{i=1}^{m} U_i$ can be covered by at most $2m$ closed intervals of diameter $c$.
\end{rmk}

\begin{lem}
    Let $a=\{a_n\}$ be a summable non-increasing sequence and let $E\in \mathcal{C}_a$ be any complementary set. Then 
    $$\uda D_{a} \leq \uda E \quad\text{ and }\quad \da D_{a} \leq \da E.$$ 
\end{lem}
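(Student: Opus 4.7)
The plan is to deduce both inequalities from Proposition~\ref{banajis bound}, using the key fact (due to Falconer) that every set in $\mathcal{C}_a$ shares the same upper and lower box dimensions. Since $\dim_L D_a = 0$ and $\dim_A D_a = 1$, Theorem~\ref{dimension de suceciones} shows that $\uda D_a$ is exactly the value produced by Proposition~\ref{banajis bound} when the lower Assouad dimension equals $0$ and the Assouad dimension equals $1$. For a general $E \in \mathcal{C}_a$, the box dimensions are unchanged, but the Assouad and lower Assouad dimensions can only become more moderate; a monotonicity calculation will then show that this can only increase the Banaji lower bound.

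First I will dispose of the degenerate case $\dim_L E = \dim_A E$. Here the chain $\dim_L E \leq \da E \leq \uda E \leq \ubd E \leq \dim_A E$ collapses to a common value, so $\uda E = \ubd E = \ubd D_a \geq \uda D_a$, with the last inequality following from the standard relation between intermediate and upper box dimensions. An analogous collapse yields $\da D_a \leq \da E$.

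In the main case $\dim_L E < \dim_A E$, Proposition~\ref{banajis bound} gives $\uda E \geq f(\dim_L E, \ubd E, \dim_A E)$, where
\[
f(L, U, A) := \frac{\theta A(U-L) + L(A-U)}{\theta(U-L) + (A-U)}.
\]
A direct differentiation produces
\[
\frac{\partial f}{\partial L} = \frac{(1-\theta)(A-U)^2}{[\theta(U-L)+(A-U)]^2}, \qquad \frac{\partial f}{\partial A} = \frac{-\theta(1-\theta)(U-L)^2}{[\theta(U-L)+(A-U)]^2},
\]
so $f$ is non-decreasing in $L$ and non-increasing in $A$ on the relevant region $0 \le L \le U \le A \le 1$. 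Using $\dim_L E \geq 0$ and $\dim_A E \leq 1$, I then obtain
\[
f(\dim_L E, \ubd E, \dim_A E) \geq f(0, \ubd E, 1) = \frac{\theta\,\ubd E}{1-(1-\theta)\,\ubd E},
\]
and by Falconer's invariance combined with Theorem~\ref{dimension de suceciones} this last value equals $\uda D_a$. The proof of $\da D_a \leq \da E$ is identical, invoking the lower-dimension version of Proposition~\ref{banajis bound} and replacing $\ubd$ by $\lbd$ throughout.

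The main technical point to execute carefully will be the monotonicity calculation: it is elementary calculus but algebraically error-prone, and the case split for $\dim_L E = \dim_A E$ must be handled separately since Proposition~\ref{banajis bound} assumes strict inequality between the Assouad dimensions.
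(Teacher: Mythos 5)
Your proof is correct and follows essentially the same route as the paper: both reduce the claim to Proposition~\ref{banajis bound} via Theorem~\ref{dimension de suceciones} and the invariance of box dimensions across $\mathcal{C}_a$, exploiting monotonicity of the Banaji--Rutar bound in the lower Assouad and Assouad parameters to replace $\dim_L E$ by $0$ and $\dim_A E$ by $1$. You are in fact somewhat more careful than the paper, which leaves the monotonicity computation and the degenerate case $\dim_L E = \dim_A E$ implicit.
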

\begin{proof}\
     Using that all complementary sets in $\mathcal{C}_a$ share the same lower and upper box dimensions, that is, there exist numbers $s \leq t$ depending only on $a$ such that $\lbd E = s$ and $\ubd E = t$ for every $E \in \mathcal{C}_a$, together with Proposition \ref{banajis bound}
 we have
 \begin{align*}
     \uda D_a &=\frac{\theta\, \ubd D_a}{1-(1-\theta)\,\ubd D_a}
     = \frac{\theta \, \ubd E}{1 -(1-\theta)\,\ubd E}\\
     &\leq \frac{\theta\dim_A E \,\ubd E  }{\dim_A E-(1-\theta)\,\ubd E}\\
     &\leq \uda E.
 \end{align*}

 The proof for the lower $\theta$-intermediate dimensions is similar.
 \end{proof}

\begin{lem}
Let \( a = \{a_n\} \) be a summable sequence such that  
\(
\displaystyle\lim_{n \to \infty} \left( \frac{s_n}{s_{n+1}} \right)^{1/n} = 1.
\)  
Then any set \( E \in \mathcal{C}_a \) satisfies  
\[
\uda E \leq \uda C_a \quad \text{and} \quad \da E \leq \da C_a.
\]
\end{lem}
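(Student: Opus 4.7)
The plan is to handle the two inequalities separately: the lower one will follow from facts already established in the paper, while the upper one requires constructing, for each $E \in \mathcal{C}_a$, an admissible cover whose size matches that of the optimal cover of $C_a$ built in Theorem \ref{dimension Cantor}.

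For the lower intermediate dimension, I would use that all sets in $\mathcal{C}_a$ share the same lower box dimension, together with the fact that $\dim_H C_a = \lbd C_a$ (see \cite{garcia2007dimension}). Since $\dim_H C_a \leq \da C_a \leq \lbd C_a$, these three values coincide, so
$$\da E \leq \lbd E = \lbd C_a = \da C_a,$$
and the lower case is complete without even invoking the hypothesis on the ratios $s_n/s_{n+1}$.

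For the upper intermediate dimension, fix $\theta \in (0,1]$. Given a small $\delta > 0$, choose $k$ with $s_{k+1} \leq \delta \leq s_k$. The key step is to cover any $E \in \mathcal{C}_a$ by $2^{\rho(k)+1}$ closed intervals of diameter $s_{\rho(k)}$, the same count that worked for $C_a$ in Theorem \ref{dimension Cantor}. To construct such a cover, remove from $[0,1]$ the $2^{\rho(k)}-1$ longest gaps of $E$, namely those of lengths $a_1, \ldots, a_{2^{\rho(k)}-1}$. Since removing $m$ disjoint open intervals from $[0,1]$ leaves at most $m+1$ closed pieces, the result is a disjoint union of at most $2^{\rho(k)}$ closed intervals containing $E$, with total length
$$1 - \sum_{i=1}^{2^{\rho(k)}-1} a_i = x_{2^{\rho(k)}} = 2^{\rho(k)}\, s_{\rho(k)}.$$
Remark \ref{lemma cubrimiento} then produces a cover of this union, and hence of $E$, by at most $2^{\rho(k)+1}$ closed intervals of diameter $s_{\rho(k)}$.

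In the favorable case $s_{\rho(k)} \in [\delta^{1/\theta}, \delta]$, this cover is admissible, and for any $t$ strictly greater than $\uda C_a = \limsup_{n \to \infty} -\rho(n)\log 2/\log s_{\rho(n)}$, the sum
$$\sum_i |U_i|^t \leq 2^{\rho(k)+1} s_{\rho(k)}^t$$
stays uniformly bounded as $\delta \to 0$. If $s_{\rho(k)}$ happens to fall outside $[\delta^{1/\theta}, \delta]$, I would replace it with $s_{\rho(k) \pm 1}$ and invoke the hypothesis $\lim_n (s_n/s_{n+1})^{1/n} = 1$ to show that this substitution does not affect the limsup, exactly as in the closing paragraph of the proof of Theorem \ref{dimension Cantor}. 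Letting $t \to \uda C_a$ then yields $\uda E \leq \uda C_a$. The main point requiring care is the counting step, but it is robust: the argument depends only on the total length of the removed gaps and on Remark \ref{lemma cubrimiento}, not on the specific arrangement of the gaps that distinguishes $E$ from $C_a$.
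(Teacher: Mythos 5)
Your proposal is correct and follows essentially the same route as the paper: the lower case via the coincidence $\da C_a = \dim_H C_a = \lbd C_a = \lbd E$, and the upper case by removing the $2^{\rho(k)}-1$ largest gaps of $E$, applying Remark \ref{lemma cubrimiento} to the resulting $2^{\rho(k)}$ closed pieces of total length $2^{\rho(k)} s_{\rho(k)}$, and handling the case $s_{\rho(k)} \notin [\delta^{1/\theta},\delta]$ with the ratio hypothesis exactly as in Theorem \ref{dimension Cantor}. No substantive differences from the paper's argument.
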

\begin{proof}\
The lower $\theta-$intermediate dimension case follows directly from the fact that $\da C_a=\lbd C_a=\lbd E$ for all $\theta.$
 Now, we will prove the upper intermediate dimension part. By Lemma~\ref{dimension Cantor}, we have
\[
\overline{\dim}_\theta C_a =
\limsup_{n \to \infty} \frac{-\rho(n) \log 2}{\log s_{\rho(n)}}.
\]
Let \( t > \uda C_a \), so that there exists \( K \in \mathbb{N} \) such that \( 2^{\rho(k)} s_{\rho(k)}^t < 1 \) for all \( k \geq K \).

Now fix \( \delta_0 > 0 \) sufficiently small so that \( \delta_0 < s_K \), and let \( \delta \in (0, \delta_0) \). Let \( n \in \mathbb{N} \) be such that
\[
s_{n+1} \leq \delta < s_n.
\]

First, suppose that \( \gamma(n) \leq \rho(n) < n \). Then \( \rho(n) \in [\delta^{1/\theta}, \delta] \). 
Removing from $[0,1]$ the complementary gaps of $E$ that have lengths $a_1,...,a_{2^{\rho(n)} - 1}$, we obtain the set \( E_1\cup E_2\cup \dots \cup E_{2^{\rho(n)}} \),where $E_i$ are closed intervals (some of these intervals may be degenerate, that is, single points).

Then, the collection \( \{E_i\} \) forms a cover of \( E \), and we have
\[
\sum_{i=1}^{2^{\rho(n)}} |E_i| = \sum_{i=2^{\rho(n)}}^{\infty} a_i = 2^{\rho(n)} s_{\rho(n)}.
\]
By Remark~\ref{lemma cubrimiento}, it follows that there exists a family of \( 2^{\rho(n)+1} \) sets, \( \{U_i\} \), such that
\[
E \subset \bigcup_{i=1}^{2^{\rho(n)}} E_i \subset \bigcup_{i=1}^{2^{\rho(n)+1}} U_i,
\quad \text{
with \( |U_i| = s_{\rho(n)} \) for all \( i \)}.
\]
Then we have
\[
\sum_i |U_i|^t = 2 \cdot \left( 2^{\rho(n)} s_{\rho(n)}^t \right) < 2,
\]
which implies \( \uda E \leq t \), and the result follows by letting \( t \to \uda C_a \).

Now, if \( \rho(n) = \gamma(n) + 1 \) or \( \rho(n) = n \), then either \( s_{\rho(n)-1} \) or \( s_{\rho(n)+1} \) belongs to \( [\delta^{1/\theta}, \delta] \), and the result follows by noting that the hypothesis 
\(
\lim_{n \to \infty} \left( \frac{s_n}{s_{n+1}} \right)^{1/n} = 1
\)
implies
\begin{align*}
\limsup_{n \to \infty} \frac{-\rho(n)\log 2}{\log s_{\rho(n)-1}}  
&= \limsup_{n \to \infty} \frac{-\rho(n)\log 2}{\log s_{\rho(n)+1}}    
\\ &= \limsup_{n \to \infty} \frac{-\rho(n)\log 2}{\log s_{\rho(n)}} 
= \uda C_a.
\end{align*}
\end{proof}
 
\section{Attainable values}
A natural question is whether all possible values of $\theta$-intermediate dimensions for sets in $\mathcal{C}_a$ are attained within the intervals $[\underline{\dim}_\theta D_{a}, \underline{\dim}_\theta C_{a}]$ or $[\overline{\dim}_\theta D_{a}, \overline{\dim}_\theta C_{a}]$. In the next theorem, we provide conditions under which this holds.

Recall that the Assouad dimensions of complementary sets form a closed interval if the sequence is doubling. Conversely, the lower Assouad dimensions of complementary sets form a closed interval if the sequence satisfies $2 < \inf_n s_n/s_{n+1}$ (see \cite{garcia2018assouad}). These two conditions, in fact, constitute our hypotheses to ensure that the intermediate dimensions of complementary sets also form a closed interval. Indeed, it is straightforward to prove that for a decreasing sequence satisfying $2 < \inf_n s_n/s_{n+1}$, the doubling condition is equivalent to $\sup_n s_n/s_{n+1} < \infty$.

\begin{theo}\label{Maintheo}
Let \( \theta \in (0,1] \) and let \( a = \{a_n\} \) be a positive, non-increasing, summable sequence. If 
\[
2 < \inf_n \frac{s_n(a)}{s_{n+1}(a)} \leq \sup_n \frac{s_n(a)}{s_{n+1}(a)} < \infty,
\]
then for every 
\[
t \in \left[ \overline{\dim}_\theta D_a, \, \overline{\dim}_\theta C_a \right],
\]
there exists a set \( E = E(t, \theta) \in \mathcal{C}_a \) such that \( \overline{\dim}_\theta E = t \). An analogous result holds for the upper $\theta$-intermediate dimension, i.e., for every 
\[
t \in \left[ \underline{\dim}_\theta D_a, \, \underline{\dim}_\theta C_a \right],
\]
there exists a set \( E' = E'(t, \theta) \in \mathcal{C}_a \) such that \( \underline{\dim}_\theta E' = t \).
\end{theo}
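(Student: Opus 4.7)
The plan is to adapt the mixing construction of \cite{garcia2018assouad}, which proved the analogous attainability statements for $\dim_A$ and $\dim_L$ under exactly the same two-sided hypothesis on $s_n/s_{n+1}$. For a parameter $\lambda \in [0,1]$ I would construct $E_\lambda \in \mathcal{C}_a$ by partitioning $\mathbb{N}$ into consecutive blocks and, within each block, arranging the corresponding gaps either in a \emph{Cantor-like} pattern (a scaled replica of the first few levels of $C_a$, contributing dimensionally like $C_a$) or in a \emph{linear} pattern (consecutive end-to-end placement of the gaps, contributing locally like a tail of $D_a$). The density of Cantor-like blocks is controlled by $\lambda$, with $\lambda = 0$ recovering $D_a$ and $\lambda = 1$ recovering $C_a$.

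The two-sided hypothesis plays the following role. The condition $\inf_n s_n/s_{n+1} > 2$ guarantees that the total length of the gaps of generations beyond $n$ is strictly less than the smallest gap of generation $n$, so the linear blocks remain geometrically isolated and behave like disjoint scaled copies of $D_a$-tails at their natural scale. The doubling condition $\sup_n s_n/s_{n+1} < \infty$ guarantees conversely that, inside a Cantor-like block of depth $k$, all $2^k$ surviving pieces have comparable length, so the block genuinely behaves as a scaled copy of the first $k$ levels of $C_a$ at its characteristic scale.

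The second step is to show that $\lambda \mapsto \overline{\dim}_\theta E_\lambda$ is continuous (in fact monotone) on $[0,1]$, interpolating between $\overline{\dim}_\theta D_a$ and $\overline{\dim}_\theta C_a$. For the upper bound on $\overline{\dim}_\theta E_\lambda$, given $\delta$ with $s_{n+1} \leq \delta < s_n$ I would cover each Cantor-like block by $2^{\rho(n)+1}$ intervals of length $s_{\rho(n)}$ (exactly as in the proof of the preceding lemma) and each linear block by intervals of length $\delta$ controlled by $\overline{\dim}_B D_a$, as in Theorem \ref{dimension de suceciones}. Summing block contributions and taking the $\limsup$ produces an expression that depends monotonically and continuously on $\lambda$ and attains the correct endpoint values; the intermediate value theorem then yields, for any prescribed $t$ in the stated interval, some $\lambda(t)$ with $\overline{\dim}_\theta E_{\lambda(t)} = t$. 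The lower $\theta$-intermediate case proceeds identically with $\limsup$ replaced by $\liminf$.

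The hard part will be the matching lower bound $\overline{\dim}_\theta E_\lambda \geq t$. Unlike in the box-counting or Assouad settings, $\theta$-intermediate covers are constrained to use diameters in the window $[\delta^{1/\theta}, \delta]$, so one cannot separately optimize the cover scale for the Cantor-like and linear portions. I would argue, in parallel with the counting step in the proof of Theorem \ref{dimension Cantor}, that any admissible cover must intersect every surviving interval at the Cantor-like generation $\rho(n)$, and that the two-sided control on $s_n/s_{n+1}$ prevents any in-window scale choice from beating the natural cover: the doubling side ensures that $s_{\rho(n)}$ remains the efficient scale on the Cantor-like blocks, while $\inf_n s_n/s_{n+1} > 2$ keeps the linear blocks geometrically separated so that no single cover element can absorb several of them at once.
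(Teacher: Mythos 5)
Your proposal has a genuine gap at its core. The entire argument rests on the claim that $\lambda \mapsto \uda E_\lambda$ is continuous and monotone on $[0,1]$ with the correct endpoint values, so that the intermediate value theorem finishes the proof; but this is exactly the statement that needs to be proved, and it is only asserted. Worse, for a $\limsup$-type quantity the ``density of Cantor-like blocks'' is very likely the wrong knob: $\uda$ is finitely stable, so $\uda E_\lambda = \max\{\uda(\text{Cantor part}), \uda(\text{linear part})\}$, and if the Cantor-like blocks recur at arbitrarily small scales with unbounded depth, the Cantor part already realizes the full value $\uda C_a$ along that subsequence of scales no matter how sparse the blocks are. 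To land strictly between $\uda D_a$ and $\uda C_a$ one must \emph{thin the Cantor structure itself}, not dilute its frequency; your sketch (``first few levels'', ``density controlled by $\lambda$'') does not quantify depth versus scale, and without that the claimed continuity in $\lambda$ has no basis. The lower bound is also left as ``the hard part'' with only a heuristic.

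The paper takes a different and more direct route that sidesteps the interpolation problem entirely. Given $t$, set $s := \uda C_a / t > 1$ and extract a subsequence $b$ of $a$ with $b_{2^k+i} \asymp (a_{2^k})^s$ (this is where both hypotheses on $s_n/s_{n+1}$ are used, to guarantee $\inf_n s_n(b)/s_{n+1}(b) > 2$ and $s_n(b) \asymp (s_n(a))^s$). A covering comparison between the construction intervals of $C_a$ and $C_b$ then gives $\uda C_b = s^{-1}\uda C_a = t$ exactly. The set is $E = C_b \cup D_{a'}$ with $D_{a'}$ built from the leftover gaps; finite stability of $\uda$ plus $\uda D_{a'} = \uda D_a \le t$ yields $\uda E = t$ with no limiting argument in a mixing parameter. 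For the lower intermediate dimension, where finite stability fails, the paper replaces it with an explicit three-part cover (Cantor part at scale $s_{n_k}(b)$, finitely many points at scale $\delta$, the tail at scale $\delta^\theta$) along a chosen subsequence of scales. If you want to salvage your approach, you would need to make the depth of the Cantor-like blocks the tunable parameter and prove quantitative two-sided covering estimates for the resulting sets; at that point you would essentially be reconstructing the paper's subsequence $b$.
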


\begin{proof}
The proof proceeds by explicit construction of the sets $E$ and $E'$. Given the sequence \(a\), we will construct a Cantor-type set \(C_b\) associated to a subsequence \(b\) of \(\{a_n\}\), and a countable set \(D_{a'}\) associated to the remaining elements of \(\{a_n\}\). We will then define
\[
E = C_b \cup D_{a'},
\]
and show that \( \overline{\dim}_\theta E = t\).

First, we note that the hypothesis $2<\inf_n\frac{s_n(a)}{s_{n+1}(a)}\leq \sup_n\frac{s_n(a)}{s_{n+1}(a)}<\infty $ implies that $a$ is a doubling sequence.

Let \( \theta \in (0,1] \) and fix \( t \in ( \overline{\dim}_\theta D_a , \overline{\dim}_\theta C_a) \). Define
\[
s := \frac{\overline{\dim}_\theta C_a}{t} > 1.
\]
For each \(n\), let \(j=j(n)\) be such that
\begin{equation}\label{j(n)}
    a_{2^{j+1}} < (a_{2^n})^s \leq a_{2^j}, \qquad j \geq n.
\end{equation}

\[ b_{2^k+i} = \begin{cases}                  a_{2^{j(k)+1}+i} & \text{if $j(k)>j(k-1)$,}\\                  a_{2^{j(k)+1}+\sum_{r=1}^m 2^{k-r}+i} &    \text{if $j(k)=\cdot\cdot\cdot=j(k-m)>j(k-m-1)$,}           \end{cases} \]

for \(k\ge 1\) and \(0\le i<2^k\), where \(m\) denotes the largest integer with \(0\le m\le k-1\) such that \(j(k)=j(k-1)=\cdots=j(k-m)\). 
We also adopt the convention \(j(0)=0\).\\
By construction and using the doubling condition of $a$, there exists a constant $c>0$ such that for each \(k \geq 1\) and \(0 \leq i < 2^k\),
\[
c^{-1} (a_{2^k})^s \leq b_{2^k+i} \leq c (a_{2^k})^s.
\]

Now, observe that

$$
\inf_n \frac{s_n(b)}{s_{n+1}(b)} > 2.
$$

Indeed, recall that

$$
s_n(a) = 2^{-n} \sum_{i=2^n}^{2^{n+1}-1} a_i + 2 s_{n+1}(a).
$$

From this identity, the hypothesis

$$
2 < \inf_n \frac{s_n(a)}{s_{n+1}(a)}
$$

can be rewritten in the equivalent form

$$
\sup_n \frac{s_{n+1}(a)}{r_n(a)} < \infty,
$$

where

$$
r_n(a) = 2^{-n}\sum_{i=2^n}^{2^{n+1}-1} a_i.
$$

Using the doubling condition for $a$, one directly obtains

$$
\frac{s_{n+1}(b)}{r_n(b)}\leq \frac{C}{2^{-(n+1)}} \sum_{i=2^{n+1}}^{\infty}\left(\frac{a_i}{a_{2^n}}\right)^s
< C\, \frac{2^{-(n+1)}\sum_{i=2^{n+1}}^{\infty}a_i}{a_{2^n}}
\leq C'\frac{s_{n+1}(a)}{r_n(a)},
$$

for some constant $C$ and $C'$, and hence $\sup_n \tfrac{s_{n+1}(b)}{r_n(b)} < \infty$.

It follows that for doubling decreasing sequences with this property we have\[b_{2^n}\asymp s_n(b).\]Indeed, if \(s_{n+1}(b)/s_n(b)<1/2-\delta\), then\[2\delta s_n(b) \leq s_n(b)-2s_{n+1}(b)=2^{-n}\sum_{j=2^n}^{2^{n+1}-1}b_j\leq b_{2^{n}}\leq c\, 2^{-n} \sum_{j=2^n}^{2^{n+1}-1}b_j \leq c\,s_n(b).\]Hence\[s_n(b)\asymp b_{2^n}\asymp (a_{2^n})^s\asymp (s_n(a))^s.\]

Now, we will prove that $\overline{\dim}_\theta C_b=s^{-1}\overline{\dim}_\theta C_a.$

Let $d > \overline{\dim}_\theta C_a$. Then, for all $\epsilon > 0$, there exists $\delta_0 > 0$ such that for every $\delta \in (0,\delta_0)$ there is a cover $\{U_i\}$ of $C_a$ satisfying
\begin{equation}\label{ecuacion definicion}
\delta \leq |U_i| \leq \delta^\theta
\quad \text{and} \quad
\sum_i |U_i|^d < \epsilon.
\end{equation}
Let $\delta$ and the cover $\{U_i\}$ satisfy \eqref{ecuacion definicion}. For each $i$, choose $k(i)$ so that
\[
s_{k(i)+1}(a) \leq |U_i| < s_{k(i)}(a).
\]

It follows that each $U_i$ can intersect at most four closed intervals from step $k(i)-2$ in the construction of $C_a$, namely the intervals $I_{j_n}^{\,k(i)-2}(a)$ with $n=1,\dots,4$. Consequently, we can cover $C_a$ by
\[
C_a \subset \bigcup_i \bigcup_{n=1}^4 I_{j_n}^{\,k(i)-2}(a).
\]
Analogously, we can cover $C_b$ by selecting the corresponding intervals from the same steps in the construction of $C_b$, that is,
\[
C_b \subset \bigcup_i \bigcup_{n=1}^4 I_{j_n}^{\,k(i)-2}(b).
\]

Now, since
\[
\delta^s \leq c_1 s_{k(i)-3}(b) \leq c_1 |I_{j_n}^{\,k(i)-2}(b)| \leq c_1 s_{k(i)-1}(b) 
\leq c_1 c_2 \,(s_{k(i)-1}(a))^s \leq c_1 c_2\, |U_i|^s,
\]
for some constants $c_1, c_2$, it follows that
\[
|I_{j_n}^{\,k(i)-2}(b)| \in \bigl(c_1^{-1}\delta^s,\, c_1 c_2 \delta^{\theta s}\bigr).
\]
Moreover,
\[
\sum_i \sum_{n=1}^4 |I_{j_n}^{\,k(i)-2}(b)|^{d/s} < 4\, c_1 c_2 \,\epsilon,
\]
and therefore
\[
\overline{\dim}_\theta C_b \leq s^{-1}\,\overline{\dim}_\theta C_a.
\]
An analogous argument shows that
\[
s^{-1}\,\overline{\dim}_\theta C_a \leq \overline{\dim}_\theta C_b.
\]

We now define the complementary countable part. Let $\{\tilde{a}_n\}$ be a non-increasing sequence such that
\[
\{a_n : n \in \mathbb{N}\} = \{b_n : n \in \mathbb{N}\} \cup \{\tilde{a}_n : n \in \mathbb{N}\}.
\]
Define the associated countable set
\[
D_{a'} = \bigcup_{k=1}^{\infty} y_k, \qquad y_k = r + \sum_{i=k}^\infty \tilde{a}_i,
\]
where
\[
r = \sum_{i=1}^{\infty} b_i.
\]

Finally, set
\[
E = C_b \cup D_{a'}.
\]
Then \(E \in \mathcal{C}_a\). By finite stability and the invariance of the upper box dimension for complementary sets we have
\[
\ubd D_a =\overline{\dim}_B E = \max\{\overline{\dim}_B C_b, \overline{\dim}_B D_{a'}\}.
\]
Since
\[
\overline{\dim}_B C_b = s^{-1}\overline{\dim}_B C_a < \overline{\dim}_B D_a,
\]
we deduce that
\[
\overline{\dim}_B D_a = \overline{\dim}_B D_{a'}.
\]
Therefore
\[
\overline{\dim}_\theta D_a = \overline{\dim}_\theta D_{a'},
\]
by Theorem \ref{dimension de suceciones}.

Combining these results, we conclude that
\begin{align*}
\overline{\dim}_\theta E
&= \max\{s^{-1}\overline{\dim}_\theta C_a , \overline{\dim}_\theta D_{a'}\} \\
&= \max\{t, \overline{\dim}_\theta D_a\} = t,
\end{align*}
as required.

For the lower $\theta$-intermediate dimension, we must change strategy, as the lower $\theta$-intermediate dimension (as well as the lower box dimension) is not finitely stable. We will consider a similar set and proceed with a covering strategy to prove the theorem.

Let \( t \in (\underline{\dim}_\theta D_a, \underline{\dim}_\theta C_a) \). Proceed as before to construct a set \( E' \in \mathcal{C}_a \) such that \( E' = C_b \cup D_{a'} \), where $C_b$ is a decreasing Cantor set and \( D_{a'} \) is a countable set.

Since both sequences \( \{b_n\} \) and \( \{a_n\} \) are non-increasing, it follows from Proposition \ref{hausdorff cantor} and the subsequent remarks that \[ s^{-1}\underline{\dim}_\theta C_a = s^{-1}\underline{\dim}_B C_a = \underline{\dim}_B C_b = \underline{\dim}_\theta C_b  \] for all \( \theta \).

Now since  
\[
\frac{\theta \underline{\dim}_B C_a}{1 - \underline{\dim}_B C_a (1 - \theta)} = \underline{\dim}_\theta D_a < t = \frac{\underline{\dim}_B C_a}{s},
\]  
we have that  
\[
s < \frac{1 - (1 - \theta) \underline{\dim}_B C_a}{\theta}.
\]  
Therefore, we can choose \( t' > t \) and \( B > \underline{\dim}_B C_a > b \) such that  
\begin{equation}\label{condicion s}
t' > \max\left\{ \frac{B}{s},\ \frac{\theta B}{1 - B(1 - \theta)},\ \frac{\theta b - \theta(1 - b / t's)}{1 - b(1 - \theta) - (1 - b / t's)(1 - \theta)} \right\}
\end{equation}  
and  
\begin{equation}\label{condicion s 2}
s < \frac{1 - (1 - \theta) s t'}{\theta}.
\end{equation}

Now, since by construction of $E'$, $(s_n(a))^s \asymp s_n(b)$, it follows that there exist a subsequence $\{n_k\}$ and a constant $C > 0$ such that
\begin{equation}\label{Desigualdad de sna}
2^{-n_k(1/b)} \leq s_{n_k}(a) \leq 2^{-n_k(1/B)}
\end{equation}
and
\begin{equation}\label{Desigualdad de snb}
s_{n_k}(b) \leq C \, 2^{-n_k(s/B)}.
\end{equation}

Let $\delta_k := (s_{n_k}(a))^{1/(t' + \theta(1 - t'))}$. 
By equation \eqref{condicion s 2}, we have 
\[
(s_{n_k}(a))^s = \delta_k^{s t'(1 - \theta) + \theta s} \in (\delta_k, \delta_k^\theta),
\]
and therefore $C' s_{n_k}(b)$ is an admissible diameter for covering sets, for some constant $C'$.

Since the closed intervals of a decreasing Cantor set, say \( I_i^n \), satisfy \( s_{n+1} \leq |I_i^n| \leq s_{n-1} \), we can cover \( C_b \) with \( 2^{n_k + 1} \) intervals of length \(C' s_{n_k}(b) \).

To cover \( D_{a'} = \{x'_1, x'_2, \ldots\} \), we will cover \( \{x'_i : i = 1, \ldots, 2^{n_k}\} \) with \( 2^{n_k} \) sets of diameter \( \delta \), and the remaining points will be covered with
\[
\left\lceil \frac{2^{n_k} s_{n_k}(a'')}{\delta^\theta} \right\rceil \leq \left\lceil \frac{2^{n_k} s_{n_k}(a)}{\delta^\theta} \right\rceil \leq \frac{2^{-n_k(1 - t's)/t's}}{\delta^\theta} + 1,
\]
sets of diameter \( \delta^\theta \), using \eqref{Desigualdad de sna}.

Therefore we have created a cover \( \{U_i\} \), of \( E \), by sets whose diameters belong to \( [\delta, \delta^{\theta}] \), and that satisfies
\begin{align*}
    \sum_i |U_i|^{t'} &\leq 2C' 2^{n_k} s_{n_k}(b)^{t'} + 2^{n_k} \delta^{t'} + \left( c' \frac{2^{-n_k(1 - t's)/t's}}{\delta^\theta} + 1 \right) \delta^{\theta t'} \\
    &= S_1 + S_2 + S_3.
\end{align*}
Using \eqref{Desigualdad de snb} and \eqref{condicion s}, we have that \( S_1 \to 0 \) as \( n_k \to \infty \).

For \( S_2 \) we have
\begin{align*}
    2^{n_k} \delta^{t'} = 2^{n_k} (s_{n_k}(a))^{t'/(\theta + t'(1 - \theta))} \leq 2^{-n_k(t'/(B(\theta + t'(1 - \theta))) - 1)} \to 0
\end{align*}
since \( t' > \theta B / (1 - B(1 - \theta)) \).

On the other hand, \( S_3 \) will tend to \( 0 \) since by \eqref{condicion s} we have 
\[
t' > \frac{\theta b - \theta(1 - b / t's)}{1 - b(1 - \theta) - (1 - b / t's)(1 - \theta)}.
\]

Then, it follows that
\[
\underline{\dim}_\theta E' \leq t'
\]
and the result follows by letting \( t' \to t \) and noting that \( t \leq \underline{\dim}_\theta E' \) since \( C_b \subset E' \).
\end{proof}

\section*{Further Work}

In \cite{Banaji-R22}, the authors provided necessary and sufficient conditions for a function $h(\theta)$ to arise as the intermediate dimension of a bounded subset of $\mathbb{R}^n$. Specifically, for $0 \leq \lambda < \alpha \leq n$, denote by $\mathcal{H}(\lambda,\alpha)$ the set of all functions $h:[0,1]\to [\lambda,\alpha]$ that are non-decreasing, continuous on $(0,1]$, and satisfy
\[
\limsup_{\epsilon\to 0^+}\frac{h(\theta+\epsilon)-h(\theta)}{\epsilon}\leq \frac{(h(\theta)-\lambda)(\alpha - h(\theta))}{(\alpha-\lambda)\theta}, \quad \forall\, \theta\in (0,1).
\]
If $\lambda = \alpha$, then $\mathcal{H}(\lambda,\alpha)$ consists only of the constant function $h(\theta)=\lambda$.  
The following theorem was established in \cite{Banaji-R22}.

\begin{theo}\cite[Theorem B]{Banaji-R22}\label{BanajiRutar}
Let $F\subset \mathbb{R}^n$ satisfy $\dim_L F=\lambda$ and $\dim_A F=\alpha$. Then with $h_1(\theta)=\da F$ and $h_2(\theta)=\uda F$, we have $h_1,h_2\in \mathcal{H}(\lambda,\alpha)$, $h_1\leq h_2$, and $h_1(0)=h_2(0)$.  

Conversely, if $0\leq \lambda\leq \alpha\leq n$ and $h_1,h_2\in \mathcal{H}(\lambda,\alpha)$ satisfy $h_1\leq h_2$ and $h_1(0)=h_2(0)$, then there exists a compact perfect set $F\subset \mathbb{R}^n$ such that
\[
\dim_L F=\lambda,\qquad 
\da F=h_1(\theta),\qquad 
\uda F=h_2(\theta),\qquad 
\dim_A F=\alpha,
\]
for all $\theta\in[0,1]$.
\end{theo}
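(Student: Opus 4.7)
The forward direction verifies each required property of $h_1(\theta) = \da F$ and $h_2(\theta) = \uda F$. Monotonicity in $\theta$ follows directly from Definition \ref{adef}, since enlarging $\theta$ enlarges the set of admissible covers. Continuity on $(0,1]$ is already established in \cite{FFK}. The inequality $h_1 \leq h_2$ is immediate from the definitions, and $h_1(0) = h_2(0)$ follows from the standard observation that at $\theta = 0$ only the trivial (upper) covering constraint survives, so the two quantities collapse. The real content is the upper-Dini derivative bound, which is an infinitesimal strengthening of Proposition \ref{banajis bound}. I would prove it by the same interpolation argument used in Corollary 2.8 of \cite{Banaji-R22}: given an admissible $\theta$-cover $\{U_i\}$ of $F$ at scale $\delta$ with covering sum close to $h_2(\theta)$, group the $U_i$ into clusters to obtain a cover at scale $\delta^{1/\theta'}$ for $\theta' > \theta$, controlling the new sum by the Assouad and lower Assouad bounds $\alpha, \lambda$. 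Rearranging the resulting inequality and letting $\theta' \to \theta^+$ yields precisely the differential inequality defining $\mathcal{H}(\lambda,\alpha)$.

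For the converse I would construct $F$ as a homogeneous Moran-type set in $[0,1]^n$ built from a single sequence of data $(N_k, r_k)_{k \geq 1}$, where at level $k$ each surviving cube is replaced by $N_k$ equally spaced subcubes of side $r_k$ times the parent. For such a set, one can show
\[
\uda F = \limsup_{k \to \infty} \sup_{m \in M_\theta(k)} \frac{\log(N_1 \cdots N_k)}{-\log(r_1 \cdots r_m)},
\]
and an analogous liminf formula for $\da F$, where $M_\theta(k)$ is the set of $m \geq k$ with $r_1 \cdots r_m$ in the admissible diameter range $[(r_1 \cdots r_k)^{1/\theta}, r_1 \cdots r_k]$. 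The plan is to choose $(N_k, r_k)$ so that both formulas reproduce $h_2$ and $h_1$ at every $\theta \in (0,1]$.

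The main obstacle is arranging a single Moran construction whose limsup and liminf profiles match \textbf{two} prescribed functions of $\theta$ simultaneously. I would fix countable dense subsets $Q_1 \subset h_1((0,1])$ and $Q_2 \subset h_2((0,1])$, enumerate pairs of targets, and partition $\mathbb{N}$ into long blocks alternating between ``Assouad-type'' phases (where $(N_k, r_k)$ realizes branching close to $\alpha$) and ``lower-Assouad-type'' phases (branching close to $\lambda$). Within each block the lengths of the Assouad and lower-Assouad sub-phases are finely tuned so that the covering ratio above, evaluated at scales indexed by that block, approximates the chosen target pair. The differential inequality defining $\mathcal{H}(\lambda,\alpha)$ is exactly the geometric constraint that makes such an interleaving feasible at every intermediate $\theta$: it is the integrated form of the fact that the covering ratio at scales related by a ratio $\theta$ cannot change faster than what the Moran geometry permits between $\lambda$ and $\alpha$.

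Finally, I would verify the construction works by first checking the values of $h_1(\theta), h_2(\theta)$ at the dense targets via explicit computation along the prescribed block structure, and then extending to arbitrary $\theta$ by using the continuity of $\da F$ and $\uda F$ on $(0,1]$ together with the fact that $h_1, h_2$ are continuous. Perfectness and compactness of $F$ are automatic from the Moran construction, while $\dim_L F = \lambda$ and $\dim_A F = \alpha$ follow from the existence of the extremal Assouad-type and lower-Assouad-type phases in the block decomposition. The delicate point throughout is synchronizing the two profiles, and this is where I expect the proof to require the most care.
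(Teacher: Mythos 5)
First, a structural point: the paper does not prove this statement at all --- it is quoted verbatim as \cite[Theorem B]{Banaji-R22} in the ``Further Work'' section, so there is no in-paper proof to compare against. Your proposal therefore has to stand on its own as a proof of the Banaji--Rutar theorem, and as it stands it is a plan rather than a proof. The forward direction is mostly fine in outline (monotonicity, continuity on $(0,1]$, $h_1\leq h_2$, and $h_1(0)=h_2(0)=\dim_H F$ are standard), but the only substantive claim --- the upper Dini derivative bound --- is asserted to follow ``by the same interpolation argument'' without being carried out; that inequality is the actual content of the forward direction and needs the quantitative clustering estimate, not a pointer to it.

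In the converse direction there is a concrete error and a deferred core. The covering formula you write for a homogeneous Moran set is wrong: with $m\geq k$ the quantity $\log(N_1\cdots N_k)/(-\log(r_1\cdots r_m))$ is maximized at $m=k$, so your $\sup_{m\in M_\theta(k)}$ just returns the box-counting ratio at level $k$. The correct expression optimizes the covering scale, i.e.\ it is $\limsup_{k}\min_{m\in M_\theta(k)}\log(N_1\cdots N_m)/(-\log(r_1\cdots r_m))$ (compare the role of $\rho_{\theta,a}$ and the $\argmin$ in Definition \ref{rho} and Theorem \ref{dimension Cantor} of this paper), and the matching lower bound requires a mass-distribution argument ruling out mixed-scale covers, which you do not address. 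Beyond that, the genuinely hard part --- choosing one sequence $(N_k,r_k)$ whose $\limsup$ profile equals $h_2(\theta)$ and whose $\liminf$ profile equals $h_1(\theta)$ \emph{simultaneously for every} $\theta\in(0,1]$, while ensuring that \emph{every} window of levels (not just the designated extremal phases) keeps the local exponents between $\lambda$ and $\alpha$ so that $\dim_L F=\lambda$ and $\dim_A F=\alpha$ exactly --- is exactly the point you flag as ``where I expect the proof to require the most care'' and leave open. Since that synchronization is the theorem, the proposal has a genuine gap there.
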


A natural question connecting \cite{Banaji-R22} with the present work is the following:  
for a fixed sequence $a$, can we describe conditions ensuring that a given function $h(\theta)$ can be realized as the intermediate dimension of a complementary set $E\in \mathcal{C}_a$?  

In this paper, some partial results in that direction were obtained. For instance, under the assumption 
\[
2<\inf_n \frac{s_n}{s_{n+1}}\leq \sup_n \frac{s_n}{s_{n+1}}<\infty,
\]
the upper intermediate dimension part of Theorem~\ref{Maintheo} can be interpreted as follows:  
for any $s>1$, there exists a set $E\in \mathcal{C}_a$ such that
\[
\uda E=\max \left\{ \frac{\uda C_a}{s},\,
\frac{\theta\,\ubd C_a}{1-(1-\theta)\,\ubd C_a} \right\}
=:h_s(\theta),
\]
that is, for each $s>1$, the function $h_s(\theta)$ is attained as the upper intermediate dimension of some complementary set.
Moreover, any dimension function $h(\theta)$ of a set $E\in \mathcal{C}_a$, must satisfy
\[
\frac{\theta\,\ubd C_a}{1-(1-\theta)\,\ubd C_a}\leq h(\theta)\leq \uda C_a.
\]

To conclude, we state explicitly the open problem that naturally emerges from this discussion.

\begin{question}
Given a non-increasing, summable sequence $a=\{a_n\}$ satisfying 
\[
2<\inf_n\frac{s_n(a)}{s_{n+1}(a)}\leq \sup_n\frac{s_n(a)}{s_{n+1}(a)}<\infty,
\]
describe the structure of the families of functions
\[
\mathcal{F}^U(a):=\bigl\{\,\theta\mapsto \uda E : E\in\mathcal{C}_a\,\bigr\}
\quad\text{and}\quad
\mathcal{F}^L(a):=\bigl\{\,\theta\mapsto \da E : E\in\mathcal{C}_a\,\bigr\},
\]
that is, the collections of all functions which can arise respectively as the upper and lower $\theta$-intermediate dimensions of complementary sets associated with $a$.
\end{question}

\section*{Funding}
The research of the authors is partially supported by Grants
PICT 2022-4875 (ANPCyT) (no disbursements since December 2023), 
PIP 202287/22 (CONICET), and 
UBACyT 2022-154 (UBA). Nicolas Angelini is also partially supported by PROICO 3-0720 ``An\'alisis Real y Funcional. Ec. Diferenciales''.


\begin{thebibliography}{10}

\bibitem{angelini2}
Nicolas Angelini, Ursula~M. Molter, and Jose~M. Tejada.
\newblock Intermediate dimensions of measures: Interpolating between hausdorff and minkowski dimensions.
\newblock {\em Journal of Mathematical Analysis and Applications}, 555(1):130039, 2026.

\bibitem{angelini}
Nicolas Angelini and Ursula~Maria Molter.
\newblock Critical values for intermediate and box dimensions of projections and other images of compact sets.
\newblock {\em Journal of Fractal Geometry}, 12(3/4):369--394, 2025.

\bibitem{Banaji}
Amlan Banaji.
\newblock Generalised intermediate dimensions.
\newblock {\em Monatshefte für Mathematik}, 202(3):465–506, July 2023.

\bibitem{Banaji-R22}
Amlan Banaji and Alex Rutar.
\newblock Attainable forms of intermediate dimensions.
\newblock {\em Annales Fennici Mathematici}, 47:939--506, 2022.

\bibitem{BeyTa}
Abram Besicovitch and Stephen Taylor.
\newblock On the complementary intervals of a linear closed set of zero lebesgue measure.
\newblock In {\em Classics On Fractals}, pages 268--282. CRC Press, 2019.

\bibitem{Brownian1}
Stuart~A. Burrell.
\newblock Dimensions of fractional brownian images.
\newblock {\em Journal of Theoretical Probability}, 35(4):2217--2238, 2022.

\bibitem{BFF}
Stuart~A. Burrell, Kenneth~J. Falconer, and Jonathan~M. Fraser.
\newblock Projection theorems for intermediate dimensions.
\newblock {\em Journal of Fractal Geometry}, 8(2):95--116, 2021.

\bibitem{BFF2}
Stuart~A. Burrell, Kenneth~J. Falconer, and Jonathan~M. Fraser.
\newblock The fractal structure of elliptical polynomial spirals.
\newblock {\em Monatshefte f{\"u}r Mathematik}, 199(1):1--22, 2022.

\bibitem{cabrelli2010classifying}
Carlos Cabrelli, Kathryn Hare, and Ursula Molter.
\newblock Classifying cantor sets by their fractal dimensions.
\newblock {\em Proceedings of the American Mathematical Society}, 138(11):3965--3974, 2010.

\bibitem{cabrelli2004hausdorff}
Carlos Cabrelli, Franklin Mendivil, Ursula Molter, and Ronald Shonkwiler.
\newblock On the hausdorff h-measure of cantor sets.
\newblock {\em Pacific Journal of Mathematics}, 217(1):45--59, 2004.

\bibitem{falconer1997techniques}
Kenneth~J. Falconer.
\newblock Techniques in fractal geometry, 1997.

\bibitem{FBook}
Kenneth~J. Falconer.
\newblock {\em Fractal geometry: mathematical foundations and applications}.
\newblock John Wiley \& Sons, 2004.

\bibitem{FFK}
Kenneth~J. Falconer, Jonathan~M. Fraser, and Tom Kempton.
\newblock Intermediate dimensions.
\newblock {\em Mathematische Zeitschrift}, 296(1):813--830, 2020.

\bibitem{garcia2018assouad}
Ignacio Garc{\'\i}a, Kathryn Hare, and Franklin Mendivil.
\newblock Assouad dimensions of complementary sets.
\newblock {\em Proceedings of the Royal Society of Edinburgh Section A: Mathematics}, 148(3):517--540, 2018.

\bibitem{garcia2007dimension}
Ignacio Garc\'ia, Ursula Molter, and Roberto Scotto.
\newblock Dimension functions of cantor sets.
\newblock {\em Proceedings of the American Mathematical Society}, 135(10):3151--3161, 2007.

\bibitem{hare2013sizes}
Kathryn Hare, Franklin Mendivil, and Leandro Zuberman.
\newblock The sizes of rearrangements of cantor sets.
\newblock {\em Canadian Mathematical Bulletin}, 56(2):354--365, 2013.

\bibitem{tricot1994curves}
Claude Tricot.
\newblock {\em Curves and fractal dimension}.
\newblock Springer Science \& Business Media, 1994.

\end{thebibliography}
\end{document}